\newcommand{\defn}[1]{\textcolor{blue}{\emph{#1}}}
\newcommand*{\doi}[1]{doi: \href{https://dx.doi.org/#1}{\urlstyle{rm}\nolinkurl{#1}}}
\newcommand*{\arxiv}[1]{arXiv:  \href{https://arxiv.org/abs/#1}{\urlstyle{rm}\nolinkurl{#1}}}
\let\oldproofname=\proofname
\renewcommand{\proofname}{\rm\bf{\oldproofname}}
\newcommand{\RR}{\mathbb R}
\newcommand{\bna}{\begin{eqnarray}}
\newcommand{\ena}{\end{eqnarray}}
\newcommand{\ba}{\begin{eqnarray*}}
\newcommand{\ea}{\end{eqnarray*}}
\newcommand{\bs}[1]{}
\newtheorem{theorem}{Theorem}[section]
\newtheorem{lemma}[theorem]{Lemma}
\newtheorem{definition}[theorem]{Definition}
\newtheorem{conjecture}[theorem]{Conjecture}
\DeclareMathOperator{\tr}{tr}
\def\p{{\bf p}}
\def\hp{\hat{\bf p}}
\def\hx{\hat{\bf x}}
\def\e{{\bf e}}
\def\q{{\bf q}}
\def\0{{\bf 0}}
\let\oldv=\v
\def\v{{\bf v}}
\def\t{{\bf t}}
\def\e{{\bf e}}
\def\one{{\bf 1}}
\begin{document}
\title{The Stress-Flex Conjecture}

\author{
Robert Connelly
\and
Steven J. Gortler
\and
Louis Theran
\and
Martin Winter}
\date{}
\maketitle

\begin{abstract}
Recently, it has been proven that a tensegrity
framework that arises from coning the one-skeleton of
a convex polytope is rigid.
Since such frameworks are not always infinitesimally rigid, this leaves open 
the question as 
to whether they are at least prestress stable. We prove here
that this holds subject to an intriguing new conjecture about coned polytope frameworks, that we call the \textit{stress-flex conjecture}. Multiple numerical experiments suggest that this 
conjecture is true, and most surprisingly, seems to hold even beyond convexity and also for higher genus~polytopes.
\end{abstract}

\section{Introduction}
It was recently proven by Winter~\cite{winter} that 
if one takes the one-skeleton of a convex
polytope in $\RR^d$ and  cones it over an interior point, the resulting framework
is rigid. It is rigid as a bar-framework, where the edge lengths must stay
constant, and it is even rigid  as a tensegrity framework, where the ``polytope edges'' are allowed 
to get shorter and the ``cone edges'' are allowed to get longer. 

As an example, consider the graph
$G$ with $8$ vertices and $12$ edges
obtained from the one-skeleton of a cube 
in $\RR^3$.
Cone this graph over a $9$th point
to obtain the graph $G^*$. If we forget about the
cube's geometry and just choose
a generic placement $\hp$ of the $9$ vertices
in $\RR^3$, 
the pair 
$(G^*,\hp)$,
considered as a bar framework, will be flexible;
it will have 
a non-trivial infinitesimal flex and 
no (non-zero) equilibrium stresses. 
Let us now consider the (non-generic) frameworks
$(G^*,\hp)$,
where we use the original geometry of the cube
for the first $8$ vertices and place the the
cone point anywhere in $\RR^3$.
Such a
bar framework will have 
a one dimensional equilibrium stress space and
a two dimensional space of infinitesimal flexes
complementary to the trivial flexes.
The actual   rigidity 
of such a  bar-framework  appears to be
a subtle phenomenon and depends on exactly where the cone
point is placed.
For some placements of the cone point \emph{outside} of the cube,
we have done numerical experiments that suggest that the bar framework
will be flexible. 
For some other such placements outside of the cube, we find that the
framework is (certifiably) rigid. 
In this context,
the result of~\cite{winter} tells us that 
whenever the cone point is 
\emph{inside} the cube, the framework
must be rigid, both as a bar framework and even as a 
(less constrained) tensegrity framework.

In the rigidity literature~\cite{pss} there
is a condition called prestress stability,
which is stronger than rigidity
(though weaker than infinitesimal rigidity).
Prestress stability is the main
property used to analyze frameworks that
are not infinitesimally rigid but that might be rigid.
The property can be efficiently tested by 
solving an appropriate  semi-definite
program.

After establishing rigidity for
interior coned polytopes, in \cite[Question 5.2.]{winter} the paper asks whether they are also always prestress stable. 
Establishing prestress stability would place the
rigidity of coned polytope frameworks more clearly
within the understood rigidity landscape.
There are also practical implications. When a framework
is prestress stable, one can also use eigen-analysis to
obtain certain bounds on the robustness of this rigidity
to slight violations of the length constraints of the tensegrity
and slight alterations to the original configuration (see e.g., ~\cite[Corollary 3.3]{almost}).

In this note we explore this question, and
show that it will be answered
in the affirmative if one is able to 
prove a conjecture we pose below, that we call
the \emph{stress-flex conjecture}. This
perhaps odd-looking conjecture
is of intrinsic interest and appears to hold under numerous numerical experiments in
three dimensions, even beyond the scope needed for prestress stability.
So we are fairly hopeful that the stress-flex conjecture, and thereby prestress stability, can be verified.

In terms of techniques, \cite{winter}
relies on a result of Izmestiev~\cite{izCdv}
that generalizes a three-dimensional result of 
Lov{\'a}sz~\cite{lovS}. This result associates a special
$n$-by-$n$ 
matrix $M$ to a convex polytope 
$P$
(with $n$ vertices) in $\RR^d$ that
contains the origin in its interior. Among other things,
$M$ has a single negative eigenvalue.
In this note, we add an extra row and column to $M$
to obtain $(n+1)$-by-$(n+1)$ 
matrix $\Omega$ with a single negative
eigenvalue.
By construction, this matrix will be a stress matrix
for the framework of the coned polytope. 
This stress matrix is then used in our exploration of prestress stability.

\subsection*{Acknowledgements}
We thank Dylan Thurston
for help simplifying Lemma~\ref{lem:psd}.
We thank Miranda Holmes-Cerfon for the use of
her numerical rigidity-simulation code.

\section{Preliminaries}
Here we quickly review the basic rigidity definitions that
we will need. The key reference is~\cite{pss}.

\begin{definition}
A \defn{configuration} $\p$ of $n$ points 
in $\RR^d$ is an ordered set
of $n$ points, $\p_i \in \RR^d$.
\end{definition}

\begin{definition}
A \defn{tensegrity framework} $(G,\p)$ in $\RR^d$ is 
a configuration $\p$ of $n$ points $\RR^d$, and a 
labeled graph $G$ on $n$ vertices. 
The edges of $G$ 
are labeled as ``cables'', ``struts'' and ``bars''.
We denote by $(\bar{G},\p)$ the associated 
\defn{bar framework}, where all of the labels in $G$
are changed to~be ``bars''.
\end{definition}

\begin{definition}
A tensegrity framework $(G,\p)$ is (locally) \defn{rigid}
if for every configuration $\q$ of $n$ points in $\RR^d$
that is sufficiently close to $\p$\footnote{Formally, there exists an $\epsilon$ so that 
for every $\q$ within this distance to $\p$, the property holds.} 
and such that $\q$ is
not congruent to $\p$, 
we have the property that the Euclidean lengths of the cables
are not decreased, the lengths of the struts are not increased
and the lengths of the bars are not changed.
\end{definition}

\begin{definition}
Let $(\bar{G},\p)$ be a bar framework in $\RR^d$.
We say that a configuration $\p'$ of $n$ vectors in $\RR^d$
is an \defn{infinitesimal flex} for the framework if for all
edges $ij$, we have $(\p_i-\p_j)\cdot(\p'_i-\p'_j)=0$.

An infinitesimal flex is \defn{trivial} if there is a $d$-by-$d$
skew symmetric matrix $S$ and a vector $\t$ such that for all 
$i$,
$\p'_i=S\p_i+\t$. (Such a flex arises as
the 
time derivative of a continuous isometry of $\RR^d$.)
\end{definition}

If the only infinitesimal flexes for $(\bar{G},\hat{\p})$ are trivial ones, then
the bar framework is 
said to be \defn{infinitesimally rigid} which
implies that it must be
rigid.  The converse is not 
necessarily true.
(There is also an infinitesimal notion
in the context of tensegrity frameworks, but we
will not go into this.)

Next we develop the idea of prestress stability.
The rough idea is to establish rigidity by 
defining a certain energy function over $\p$
that only depends on the edge lengths,
and then show that $\p$ is a critical point 
such that
the energy has a positive definite Hessian.
When all of the dust settles, we are left with 
the following definitions and theorem.

\begin{definition}
Let $(G,\p)$ be a tensegrity with $n$ vertices.
An (equilibrium) \defn{stress matrix} $\Omega$ of $(G,\p)$
is an $n$-by-$n$
symmetric matrix with the following properties:
\begin{enumerate}[(i)]
    \item $\Omega_{ij}=0$ when $i\ne j$ and $ij$ is not an edge of $G$.
    \item 
    $\Omega \p=0$, where we think of $\p$ as an $n$-by-$d$ matrix.
    \item 
    $\Omega \one=0 $, where $\one$ is a vector of all ones.
\end{enumerate}
The stress is called \defn{strictly proper} if $\Omega_{ij} < 0$
when $ij$ is a cable and $\Omega_{ij}>0$ when $ij$
is a strut.
\end{definition}

\begin{definition}[{See~\cite[Proposition 3.4.2]{pss}.}]
A tensegrity framework $(G,\p)$
in $\RR^d$
is \defn{prestress stable} 
if it has a 
strictly proper equilibrium stress
matrix $\Omega$, such that  for every 
non-trivial infinitesimal flex $\p'$
of its bar-framework $(\bar{G},\p)$,
we have 
\bna
\label{eq:pss}
\tr(\p'^t \Omega \p') > 0
\ena
where we think of $\p'$ as an $n$-by-$d$ matrix.
\end{definition}

Finally, we recall the following.
\begin{theorem}[{\cite[Proposition 3.3.2]{pss}}]
If a tensegrity framework 
is prestress stable, then it is rigid.  
\end{theorem}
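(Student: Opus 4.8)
The plan is to prove rigidity by the classical stress-energy method, taking $\Omega$ to be the strictly proper equilibrium stress matrix supplied by prestress stability. I would introduce the quadratic energy $E(\q) := \tr(\q^t \Omega \q)$. Since $\Omega$ is symmetric with $\Omega\one = 0$, a one-line computation rewrites this purely in terms of squared edge lengths, $E(\q) = -\sum_{i<j}\Omega_{ij}|\q_i - \q_j|^2$, from which two facts are immediate: $E$ is invariant under congruences of $\RR^d$ (it depends only on pairwise distances), and, because $\Omega\p = 0$, the configuration $\p$ is a critical point, with the exact quadratic expansion $E(\q) = E(\p) + \tr(\u^t\Omega\u)$ for the displacement $\u := \q - \p$.

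Next I would show that $E$ cannot increase as one moves into the tensegrity-feasible region. If $\q$ satisfies the constraints (no cable longer, no strut shorter, every bar unchanged), then I compare the contributions of each edge to $E(\q)$ and to $E(\p)$ term by term: using $\Omega_{ij} < 0$ on cables and $\Omega_{ij} > 0$ on struts (this is where strict properness enters), each term obeys $-\Omega_{ij}(|\q_i - \q_j|^2 - |\p_i - \p_j|^2) \le 0$. Summing over edges gives $E(\q) \le E(\p)$, equivalently $\tr(\u^t\Omega\u) \le 0$ for every feasible $\q$.

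Now suppose for contradiction that $(G,\p)$ is not rigid, so there is a sequence of feasible configurations $\q^{(k)} \to \p$, none congruent to $\p$. Replacing each $\q^{(k)}$ by the congruent copy closest to $\p$ (harmless, as both feasibility and $E$ are congruence-invariant) arranges that the normalized displacements $\v^{(k)} := (\q^{(k)} - \p)/|\q^{(k)} - \p|$ have a subsequential unit limit $\v$ orthogonal to the space of trivial flexes, hence nontrivial. A first-order expansion of the length constraints shows that $\v$ is a tensegrity infinitesimal flex, i.e. $(\p_i - \p_j)\cdot(\v_i - \v_j)$ is $\le 0$ on cables, $\ge 0$ on struts, and $= 0$ on bars. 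The crux is to upgrade $\v$ to a genuine infinitesimal flex of the bar framework $(\bar G,\p)$. For this I would invoke the equilibrium identity $\sum_{i<j}\Omega_{ij}(\p_i - \p_j)\cdot(\v_i - \v_j) = 0$ (a direct consequence of $\Omega\p = 0$); under the flex inequalities above, strict properness makes every summand nonnegative, so the vanishing of the total sum forces each summand, and therefore each factor $(\p_i - \p_j)\cdot(\v_i - \v_j)$, to vanish. Thus $\v$ is a nontrivial flex of $(\bar G,\p)$.

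Finally I would close the argument. Dividing the feasible-region bound by $|\q^{(k)} - \p|^2$ and letting $k \to \infty$ gives $\tr(\v^t\Omega\v) \le 0$, which directly contradicts the defining inequality $\tr(\v^t\Omega\v) > 0$ of prestress stability, now applicable because $\v$ is a nontrivial flex of the bar framework. Hence no such sequence can exist and $(G,\p)$ is rigid. I expect the two genuinely delicate points to be the complementary-slackness step that promotes a one-sided tensegrity flex into a two-sided bar flex --- this is precisely where strict (as opposed to mere) properness of the stress is essential --- and the compactness-plus-quotient argument ensuring that the limiting flex direction is nontrivial rather than an artifact of the Euclidean symmetries.
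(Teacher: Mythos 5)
Your proof is correct. Be aware, though, that the paper contains no proof of this statement: it is quoted verbatim from Connelly and Whiteley \cite{pss}, so there is no internal argument to compare against. What you have written is a correct, self-contained version of the classical argument, and every step checks out: the identity $\tr(\q^t\Omega\q)=-\sum_{i<j}\Omega_{ij}|\q_i-\q_j|^2$ and the exact expansion $E(\q)=E(\p)+\tr(\u^t\Omega\u)$ (using $\Omega\p=0$ and $\Omega\one=0$); the sign analysis giving $\tr(\u^t\Omega\u)\le 0$ on the feasible set; the normalized-displacement limit $\v$, which after quotienting by congruences is a unit vector orthogonal to the trivial flexes and hence non-trivial; and the Roth--Whiteley complementary-slackness step, where the equilibrium identity $\sum_{i<j}\Omega_{ij}(\p_i-\p_j)\cdot(\v_i-\v_j)=0$ combined with strict properness forces every edge term to vanish, so that $\v$ is a flex of the bar framework $(\bar{G},\p)$ and prestress stability supplies the contradiction $\tr(\v^t\Omega\v)>0$. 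For comparison, the proof in \cite{pss} is organized around an auxiliary energy $H(\q)=\sum_{ij}f_{ij}(|\q_i-\q_j|^2)$ whose first derivatives at $\p$ reproduce the stress and whose second derivatives are large positive stiffness coefficients; prestress stability makes the Hessian of $H$ positive definite transverse to the trivial motions, so $\p$ becomes a strict local minimum of $H$ modulo congruence on the feasible set. Your version trades those auxiliary stiffness terms for the compactness argument on normalized displacements; both routes are standard, and the two ``delicate points'' you flag are exactly where the real content lies --- you handle both correctly.
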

The converse does not always hold.

\section{Izmestiev Stress}
\label{sec:iz}
Let $P\subset\RR^d$ be a convex polytope with 
$n$ vertices, with a
full affine span and with the origin
in its interior.
Let $\p$ be its vertex configuration and let $G$ be 
the graph of its one-skeleton. 
Izmestiev~\cite{izCdv} constructs 
an $n$-by-$n$ symmetric matrix $M$, that we call the
\defn{Izmestiev matrix}, with the following properties:
\begin{enumerate}[(i)]
    \item 
$M_{ij}=0$ when $i\neq j$ and $ij$ is not an edge of $G$.
\item $M_{ij} <0$ when $ij$ is an edge of $G$.
 \item $M\p=0$, where we think of $\p$ as an $n$-by-$d$
matrix. 
\item $M$ has rank $n-d$. 
\item $M$ has exactly one negative eigenvalue.
\end{enumerate}

Our goal is to use $M$ to construct an equilibrium stress
$\Omega$ for the framework $(G^*,\hp)$ obtained by coning the one-skeleton of $P$ over the origin.
Let $\alpha^t := -\one^t M$ and 
$b:=-\sum_i \alpha_i = \one^tM\one$.
The $\alpha_i$ are also known as the 
(unnormalized)
Wachspress coordinates
of the origin with respect to
$P$~\cite{winter}, 
and so each is greater than $0$, and $b$ is negative.
We define the $(n+1)$-by-$(n+1)$ matrix in block form as follows:
\ba
\Omega :=
\begin{pmatrix}
M &\alpha \\
\alpha^t & b
\end{pmatrix}
\ea
The added row/column are linearly dependent on  $M$,
and thus the rank of $\Omega$ is also $n-d$, and its nullity
is $d+1$. 
Since $M$ has one negative eigenvalue, and $\Omega$
just has an extra $0$ eigenvalue, 
from the eigenvalue interlacing theorem, $\Omega$ must
also have exactly one negative eigenvalue.

Let 
$\p_{n+1}$ be placed at the origin
and let $\hp:=[\p,\p_{n+1}]$ be the configuration 
of $n+1$ points in $\RR^d$.
Then, since $\p_{n+1}=0$ we have $\Omega \hp=0$.
By our definition of $\alpha$ and $b$, 
we have $\Omega \one=0$.
Let us label the edges of $G^*$
from the polytope as cables and the
coned edges as struts.
We see that  $\Omega$ is a
strictly proper equilibrium stress for $(G^*,\hp)$.
We call this an \defn{Izmestiev stress}.

Finally we note that the space of stresses for a framework
does not change under translation in $\RR^d$.
Thus we can start with $P$ as any convex polytope with
$n$ vertices and a full affine span in $\RR^d$ and with
$\p_{n+1}$ any point in the interior of $P$. We can create
its associated coned tensegrity, and it must have an 
Izmestiev stress.

\section{One Negative Eigenvalue}

Prestress stability, and in particular Equation (\ref{eq:pss}),
is based on positivity of a quadratic energy. Our stress matrix
$\Omega$ has one negative eigenvalue that we need to 
work around. Our main tool for doing this is the following
lemma.

\begin{lemma}
\label{lem:psd}
Let $\Omega$ be an 
$(n+1)$-by-$(n+1)$ real symmetric
matrix. Assume the following:
\begin{enumerate}[(i)]
\item $\Omega$ has exactly one negative eigenvalue.
\item $\Omega_{n+1,n+1}<0$.
\end{enumerate}
If $\hx \in \RR^{n+1}$
has the property that the $(n+1)$st
 entry of $\Omega\hx$ equals
$0$,
then $\hx^t \Omega \hx\ge 0$.
\end{lemma}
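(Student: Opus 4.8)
The plan is to recast the statement in terms of the quadratic form $\hx \mapsto \hx^t \Omega \hx$ and to use one standard fact about real symmetric matrices: the number of negative eigenvalues equals the largest dimension of a subspace on which the associated quadratic form is negative definite. Let $W := \{\hx \in \RR^{n+1} : (\Omega \hx)_{n+1} = 0\}$ be the subspace appearing in the hypothesis; the goal is to show that $\Omega$ is positive semidefinite on $W$. By assumption (i) the largest negative-definite subspace for $\Omega$ has dimension exactly one. Writing $\e_{n+1}$ for the last standard basis vector, assumption (ii) says $\e_{n+1}^t \Omega \e_{n+1} = \Omega_{n+1,n+1} < 0$, so $\mathrm{span}(\e_{n+1})$ is itself a (necessarily maximal) one-dimensional negative-definite subspace.

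I would then argue by contradiction. Suppose some $w \in W$ satisfies $w^t \Omega w < 0$, and consider the two-dimensional subspace $U := \mathrm{span}(w, \e_{n+1})$. These two vectors are linearly independent: $w \in W$ while $\e_{n+1} \notin W$, the latter because $(\Omega \e_{n+1})_{n+1} = \Omega_{n+1,n+1} \neq 0$. (Since $W$ is a subspace and $w \neq 0$, no nonzero multiple of $\e_{n+1}$ can lie in $W$.)

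The crux — and the one genuinely clever point — is that the $\Omega$-cross-term of $w$ and $\e_{n+1}$ vanishes for free: $\e_{n+1}^t \Omega w = (\Omega w)_{n+1} = 0$, which is exactly the defining condition of $W$. Hence in the basis $\{w, \e_{n+1}\}$ the form $\Omega$ restricts on $U$ to the diagonal matrix with entries $w^t \Omega w < 0$ and $\Omega_{n+1,n+1} < 0$. Thus $\Omega$ is negative definite on the two-dimensional space $U$, so by the inertia fact $\Omega$ has at least two negative eigenvalues, contradicting (i). Therefore no such $w$ exists, and $\hx^t \Omega \hx \ge 0$ for every $\hx \in W$.

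I expect the difficulty here to be conceptual rather than computational: the whole proof hinges on recognizing that $\e_{n+1}$ is the right partner to pair with $w$, and that the hypothesis $(\Omega \hx)_{n+1} = 0$ is precisely what makes it $\Omega$-orthogonal to $w$. Once that is seen, only routine signature bookkeeping remains. As a consistency check one can instead expand in block form: writing $\hx = (x, t)$ with $x \in \RR^n$ and $b := \Omega_{n+1,n+1}$, the constraint forces the middle term to cancel and leaves $\hx^t \Omega \hx = x^t A x - b\, t^2$, where $A$ is the leading $n$-by-$n$ block; the $-b\,t^2$ term is harmless since $b < 0$, but controlling $x^t A x$ still drives one back to the one-negative-eigenvalue hypothesis, which is why the subspace argument above is the cleaner route.
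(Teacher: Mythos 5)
Your proof is correct and follows essentially the same route as the paper's: both hinge on the observation that $\e_{n+1}$ is a negative vector for $\Omega$ while the hypothesis makes $\hx$ $\Omega$-orthogonal to it, and then invoke the single-negative-eigenvalue assumption via the inertia/maximal-negative-subspace characterization. Your write-up simply fills in the details that the paper's terse final sentence leaves implicit.
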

\begin{proof}
Let $\e$ be the $(n+1)$st indicator vector.
By assumption $\e^t\Omega\e<0$, so the vector, $\e$ is~in the negative 
cone of $\Omega$. 
Meanwhile, since the 
last entry of $\Omega\hx$ equals $0$,
we have 
$\e^t\Omega\hx=0$,
i.e., $\hx$ is $\Omega$-orthogonal to $\e$.
Together, since $\Omega$ has only a single negative eigenvalue,  this implies $\hx^t\Omega\hx\ge 0$.
\end{proof}

Similar ideas to Lemma~\ref{lem:psd} are used
in~\cite{izInf}.

\section{The Stress-Flex Conjecture}

The Izmestiev stress $\Omega$ satisfies the assumptions
of Lemma~\ref{lem:psd}. If we can show that each of
the $d$ spatial coordinates of $\hp'$ have the
properties of the vector $\hx$ in that lemma, then we can apply this to establish prestress stability.
The following conjectures just that:

\begin{conjecture}[The \defn{stress-flex conjecture}]
\label{conj:stress_flex}
Let $P$ be  a convex polytope 
with $n$ vertices with a full dimensional span in $\RR^d$.
Let $(G^*,\hp)$ be the tensegrity framework
on $n+1$ vertices 
defined by putting cables on the 
one-skeleton of $P$ and struts from 
the vertices of $P$ to a selected 
cone point in the interior of $P$.
Let $\Omega$ be its
Izemstiev stress. Let $\hp'$ be an infinitesimal flex
of its bar framework.
Then 
the last row of 
$\Omega \hp'$ equals zero.
\end{conjecture}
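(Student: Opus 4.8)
\medskip
\noindent\textbf{Proof proposal.}
The plan is first to strip the statement down to an assertion purely about the polytope and its Wachspress data. Normalising away the trivial translation, assume $\hp'_{n+1}=\0$ and set $\v_i:=\hp'_i-\hp'_{n+1}=\hp'_i$ for $i\le n$. Using $b=-\sum_i\alpha_i$, the last row of $\Omega\hp'$ is exactly
\[
\sum_i\alpha_i\hp'_i+b\,\hp'_{n+1}=\sum_i\alpha_i\bigl(\hp'_i-\hp'_{n+1}\bigr)=\sum_i\alpha_i\v_i,
\]
so the claim is $\sum_i\alpha_i\v_i=\0$. The flex conditions translate cleanly: the strut $i(n{+}1)$ gives $\p_i\cdot\v_i=0$, while a cable $ij$ gives $(\p_i-\p_j)\cdot(\v_i-\v_j)=0$, which in the presence of the strut conditions reduces to $\p_i\cdot\v_j+\p_j\cdot\v_i=0$. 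I would first record that the target vanishes on trivial flexes: a rotation gives $\v_i=S\p_i$, and $\sum_i\alpha_i S\p_i=S\sum_i\alpha_i\p_i=\0$ by the equilibrium/Wachspress identity $\sum_i\alpha_i\p_i=\0$ (this is the last row of $\Omega\hp=0$).

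For the general case, the second step reinterprets $\sum_i\alpha_i\v_i$ as a derivative. Since $\alpha^t=-\one^tM$, we have $\sum_i\alpha_i\v_i=-\one^tM\v$, where $\v$ is the $n\times d$ matrix of vertex velocities $\dot\p_i=\v_i$. The identities $M(\p)\p\equiv 0$ and $M_{ij}(\p)\equiv 0$ for non-edges $i\ne j$ hold for \emph{all} configurations of the given combinatorial type with the origin interior, so differentiating along $\v$ yields $M\v=-\dot M\p$ with $\dot M:=DM[\v]$ again symmetric and supported on the graph. Hence
\[
\sum_i\alpha_i\v_i=-\one^tM\v=\one^t\dot M\p=-\sum_j\dot\alpha_j\p_j,
\]
and the conjecture becomes equivalent to $\sum_j\dot\alpha_j\p_j=\0$: the position-weighted first-order variation of the Wachspress coordinates must vanish along any cone-flex. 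Equivalently, one differentiates the identity $\sum_j\alpha_j(\p)\p_j\equiv\0$.

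The third step exploits what the flex actually preserves. The strut conditions say each $|\p_i|^2$ is stationary to first order, and the cable conditions say each edge Gram entry $\p_i\cdot\p_j$ is stationary; feeding this into the scalar identity obtained by differentiating $\tr(\p^tM\p)\equiv0$ yields, for free, $\sum_{i,j}\dot M_{ij}(\p_i\cdot\p_j)=0$. As a base case I would dispatch $d=2$ directly: there $\p_i\cdot\v_i=0$ forces $\v_i=s_iR\p_i$ with $R$ the quarter-turn, and the cable conditions force the scalars $s_i$ to be equal around the polygon, so $\sum_i\alpha_i\v_i=sR\sum_i\alpha_i\p_i=\0$.

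The main obstacle is the gap between the scalar contraction $\sum_{i,j}\dot M_{ij}(\p_i\cdot\p_j)=0$ that the flex conditions readily supply and the \emph{vector} identity $\sum_{i,j}\dot M_{ij}\p_j=\0$ that we actually need. Properties (i)--(v) alone pin down only the sparsity, symmetry, kernel, rank and signature of $M$, and these seem insufficient to force the stronger vector statement. I therefore expect the real work to require Izmestiev's explicit construction: realising $M$ as (a transform of) the Hessian of the polar-dual volume under radial rescalings of the vertices, so that $\dot M$ carries the symmetry of a third derivative among its slots, and then combining that symmetry with the first-order preservation of all $|\p_i|^2$ and edge Gram entries to collapse $\sum_j\dot\alpha_j\p_j$ to zero. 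Establishing this cancellation in all dimensions is precisely where the argument is currently stuck, consistent with the statement being posed as a conjecture despite the strong low-dimensional and numerical evidence.
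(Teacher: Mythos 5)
This statement is posed in the paper as an open conjecture---the paper offers only numerical evidence, no proof---so there is no ``paper's own proof'' to compare against, and your write-up, to its credit, does not claim to close the question. Your preliminary reductions are correct and useful: normalising $\hp'_{n+1}=\0$ is harmless because adding a trivial flex does not change $\Omega\hp'$; with that normalisation the last row of $\Omega\hp'$ is exactly $\sum_i\alpha_i\hp'_i$ (using $b=-\sum_i\alpha_i$); the flex equations reduce to $\p_i\cdot\hp'_i=0$ and $\p_i\cdot\hp'_j+\p_j\cdot\hp'_i=0$ as you say; the vanishing on trivial flexes follows from $\sum_i\alpha_i\p_i=\0$; and the $d=2$ case is genuinely disposed of (there the coned polygon is a wheel, so only trivial flexes exist). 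The reformulation of the conjecture as ``$\sum_j\dot\alpha_j\p_j=\0$ along every flex'' is an attractive way to view the problem.

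However, there are two gaps, only one of which you acknowledge. The one you name---passing from the scalar contraction $\sum_{i,j}\dot M_{ij}\,(\p_i\cdot\p_j)=0$ to the vector identity $\one^t\dot M\p=\0$---is real and is essentially the entire content of the conjecture, so the proposal is a reformulation rather than progress toward a proof. The gap you do not name sits earlier, in the definition of $\dot M:=DM[\hp']$. Izmestiev's matrix is attached to a convex polytope, and to differentiate the identities $M(\p)\p\equiv\0$ and $M_{ij}(\p)\equiv 0$ (for non-edges) along $\hp'$ you need these identities to hold on a neighbourhood of $\p$ in that direction, i.e.\ you need $\hp'$ to be tangent to the stratum of configurations realising the same face lattice. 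For a non-simplicial polytope (a cube, say) a generic vertex perturbation destroys face flatness and changes the edge set---each quadrilateral face acquires a diagonal---so the support pattern of $M$ is not locally constant, and an infinitesimal flex of the coned one-skeleton has no reason to preserve flatness (the paper itself observes that flatness is lost under vertex sliding and appears to be essential for the stress-flex condition). For simplicial polytopes the differentiation is unproblematic, but there the coned framework is infinitesimally rigid and the conjecture is vacuous; the interesting cases are precisely those where your second step needs further justification. Your closing suggestion---that the third-derivative symmetry in Izmestiev's construction is where the missing cancellation must come from---is a reasonable direction, but as written the argument does not establish the statement.
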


When the last row of 
$\Omega \hp'$ equals zero, we say that the
\defn{stress-flex condition} is satisfied.
Note that this condition is always satisfied for
trivial infinitesimal flexes, since in this case
$\hp'$ is an 
affine image of $\hp$ and so $\Omega\hp'=0$.

We have done many numerical experiments in 3D whose
results support this conjecture. These results
included (simple) convex polytopes defined by the intersection of 
random planes as well a number of polytopes from the
wonderful library
at~\cite{polys}. 

In fact, this appears to
be an even more robust property than stated by the conjecture.
First of all, the stress-flex condition
experimentally holds even when 
we place the cone point outside of the polytope.
We have also looked at polytopes where the equilibrium stress space of $(G^*,\hp)$ has
dimension larger than one. 
For example,
 when $P$ is the cuboctahedron, the coned framework has
a $4$ dimensional stress space 
and it has a non-trival infinitesimal flex. When 
$P$ is  the 
rhombic dodecahedron, the coned framework has a $2$ dimensional stress space and it has a  
$3$ dimensional space of flexes complementary
to the trivial flexes.  In these cases, the stress-flex condition
holds for any of the stresses, not just the Izmestiev stress.

We have also looked at non-convex polyhedra and non-embedded
polyhedra where there are stresses and non-trivial
infinitesimal flexes,
and the stress-flex condition
continues to
hold. We have looked at higher genus polyhedra, such as
Szilassi Polyhedron.
We have even looked at non-orientable polyhedra such as the 
cubohemioctahedron. In all of these cases, the stress-flex condition still holds.
Moreover, the stress-flex condition holds for the hypercube in $\RR^4$,
coned anywhere.

In light of these experiments, we propose the 
following \defn{strong stress-flex conjecture}. 

\begin{conjecture}
Let $P$ be any ``polytope'' 
with a full dimensional affine span in $\RR^d$.
Let $\p_{n+1}$ be placed anywhere in $\RR^d$.
Let $\Omega$ be \emph{any}
stress for the bar framework defined using
one-skeleton of $P$ coned over $\p_{n+1}$. Let $\hp'$ be any infinitesimal flex of the bar framework.
Then 
the the stress-flex condition holds.
\end{conjecture}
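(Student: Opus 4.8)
The plan is to first strip the statement down to a single vector identity, and then to isolate exactly where the polytopal structure must enter. Write $\v:=\Omega\hp'$ and let $\v_i$ denote its $i$-th row. Using $\Omega\one=0$ to rewrite the diagonal, one has for every vertex the ``stressed load'' $\v_i=(\Omega\hp')_i=\sum_{j\sim i}\Omega_{ij}(\hp'_j-\hp'_i)$. Since the cone point $n+1$ is adjacent to every other vertex, and $\Omega_{n+1,n+1}=-\sum_j\alpha_j$ with $\alpha_j:=\Omega_{n+1,j}$, the last row is $\v_{n+1}=\sum_{j=1}^{n}\alpha_j\,\r'_j$, where $\r'_j:=\hp'_j-\hp'_{n+1}$. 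Thus the stress-flex condition is \emph{exactly} the assertion $\sum_j\alpha_j\,\r'_j=0$. The hypotheses in hand are the cone-point equilibrium $\sum_j\alpha_j\,\r_j=0$ (the $(n+1)$-st row of $\Omega\hp=0$, with $\r_j:=\hp_j-\hp_{n+1}$) together with the flex conditions, which on the struts read $\r_j\cdot\r'_j=0$ and on the one-skeleton read $(\hp_i-\hp_j)\cdot(\hp'_i-\hp'_j)=0$.

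Next I would record the global balance satisfied by the load $\v$. Because $\Omega$ is symmetric, $\hp^t\Omega\hp'=(\Omega\hp)^t\hp'=0$ and $\one^t\Omega\hp'=(\Omega\one)^t\hp'=0$, so $\sum_i\v_i=0$ and $\sum_i\hp_i\,\v_i^t=0$; moreover $\hp'^t\Omega\hp'$ is symmetric, so $\sum_i\hp'_i\,\v_i^t=\sum_i\v_i\,(\hp'_i)^t$. In words, the induced loads $\v_i$ form a self-equilibrated system (zero net force and zero net moment), and subtracting $\hp_{n+1}$ from each $\hp_i$ further gives $\sum_{j\le n}\r_j\,\v_j^t=0$. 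These identities, however, use only the stress axioms and are symmetric across all vertices; on their own they force $\sum_i\v_i=0$ but supply no reason for the single row $\v_{n+1}$ to vanish.

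The heart of the argument, and the step I expect to be the main obstacle, is therefore to promote the scalar, per-edge flex conditions into the vector conclusion $\v_{n+1}=0$, and this is where the closed polytopal structure of $P$ must be used essentially. The difficulty is structural: the stress supplies only scalar coefficients $\alpha_j$ at each vertex while the flex supplies only the scalar orthogonalities $\r_j\cdot\r'_j=0$, so no purely linear-algebraic combination of equilibrium and flex (each such combination I try collapses back to the global identities above) can recover the $d$ components of the vector $\v_{n+1}$. My plan is to interpret $\v_i$ as the first-order change, along the flex $\hp'$, of the equilibrium force at vertex $i$; indeed $\v=\Omega\hp'$ is, up to sign, the Hessian of the stress energy $\sum_{ij}\Omega_{ij}\lvert\hp_i-\hp_j\rvert^2$ applied to $\hp'$. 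I would then try to show that for the apex of a cone over a \emph{closed} one-skeleton this first-order force change is a discrete ``boundary term'' that must vanish: concretely, to write $\w\cdot\v_{n+1}=\sum_j\alpha_j\,(\w\cdot\r'_j)$, for arbitrary $\w\in\RR^d$, as a sum over the $2$-faces of $P$ of per-face quantities assembled from the strut- and one-skeleton-flex conditions, so that closedness of the surface (every edge carried by its faces, the faces forming a cycle in the appropriate chain complex) telescopes the sum to $0$.

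Such a local-to-global, discrete-Stokes mechanism is of a kind that is insensitive to convexity, to genus, and to orientability, which matches the experimental universality reported above and explains why I would \emph{not} route the strong conjecture through the Izmestiev--Lov\'asz matrix or through convex polarity; those are available only in the convex case and would at best settle the original, non-strong conjecture for the Izmestiev stress. The principal risk is that the required face-by-face cancellation may fail for a completely arbitrary self-stress and flex absent some compatibility condition, so identifying the precise homological input on the face lattice of $P$ that forces the boundary term to vanish is, I expect, the genuine crux of any proof.
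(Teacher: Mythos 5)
This statement is the \emph{strong stress-flex conjecture}; the paper offers no proof of it (nor of the weak version), only numerical evidence, so there is nothing in the source to compare your argument against step by step. Judged on its own terms, your proposal is not a proof. The reductions you carry out are correct and worth having: using $\Omega\one=0$ to write $(\Omega\hp')_{n+1}=\sum_{j\le n}\alpha_j(\hp'_j-\hp'_{n+1})$ so that the stress-flex condition becomes $\sum_j\alpha_j\,\r'_j=0$; recording the cone-point equilibrium $\sum_j\alpha_j\,\r_j=0$, the strut conditions $\r_j\cdot\r'_j=0$, and the global identities $\one^t\Omega\hp'=0$, $\hp^t\Omega\hp'=0$. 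But all of these follow from the stress and flex axioms alone and, as you yourself observe, they cannot single out the row $\v_{n+1}$. The entire substantive content of the conjecture is concentrated in the step you defer --- exhibiting $\w\cdot\v_{n+1}$ as a telescoping sum over the $2$-faces of $P$ --- and that step is never carried out, nor is a candidate per-face quantity even written down. An outline that ends with ``identifying the precise homological input \dots is the genuine crux'' has not closed the gap; it has located it.

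That said, your diagnosis of \emph{where} the difficulty lives is consistent with the paper's own evidence: the authors note that sliding vertices along the cone lines preserves the dimensions of the stress and flex spaces yet destroys the stress-flex condition, so face-flatness (and not just the combinatorics plus the equilibrium/flex identities) must enter any proof --- exactly as your ``no purely linear-algebraic combination suffices'' heuristic predicts. Your decision not to route the argument through the Izmestiev--Lov\'asz matrix is also sensible for the \emph{strong} conjecture, since that machinery is only available for convex $P$ with an interior cone point and a specific stress. If you want to make progress, the place to start is the reformulation in the paper's Section 5.1: after projecting out one coordinate, the claim becomes $(\p^d)^t\Psi\q'=0$ and $(\p^d)^t\Psi(\q'\cdot\q)=0$ for a stress $\Psi$ and flex $\q'$ of the projected framework. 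That version already isolates a single scalar per choice of $\w$, and any discrete-Stokes mechanism over the face lattice would have to produce exactly these two identities.
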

In this conjecture, we have chosen to not exactly pin-down
what we mean by ``polytope'', but it seems that we need
nothing more than a closed surface with flat faces.

We  note that if one starts with a convex polytope coned 
from inside, and then slides the vertices along their lines to the cone vertex, 
even though the face-flatness is not maintained,
the dimension of the
stress space and infinitesimal flex space is 
maintained.
It also can be shown that the property of prestress stability is maintained under such sliding.
But we observe that for generic sliding the stress-flex 
condition
is not maintained. So the face-flatness appears to be
crucial for the stress-flex condition.

\subsection{Another Point of View}
It can also be shown that the strong stress-flex conjecture is equivalent
to the following:
Start with any polytope $P$ 
with full affine span 
in $\RR^d$. 
Let $(G,\p)$ be 
the bar framework of its one-skeleton.
Let $(G,\q)$ be the orthogonal projection to $\RR^{d-1}$ by forgetting the last
coordinate.
Let $\Psi$ be any equilibrium stress matrix for $(G,\q)$.
Let $\q'$ be any infinitesimal flex of $(G,\q)$ in $\RR^{d-1}$.
Then the following two conditions must hold:
\ba
(\p^d)^t \Psi \q'= 0
\ea
where $\p^d$ is the $d$th spatial coordinate of $\p$ and we think of $\q'$ as an $n$-by-$(d-1)$ matrix
and 
\ba
(\p^d)^t \Psi (\q'\cdot \q)= 0
\ea
where $(\q'\cdot \q)_i := \q'_i \cdot \q_i$.

\section{Prestress Stability of a 
Convex Polytope, Coned 
From Inside}

Finally we tie things together, and prove that 
under the (weak) stress-flex conjecture, we must have
prestress stability. 
There is not much to do here,
other than use the properties
of the Izmestiev stress and Lemma~\ref{lem:psd}.
This will tell us that any infinitesimal flex must 
be an affine flex. All that is left is to string 
together some previous results that let us conclude
that this affine flex must be a trivial flex.

\begin{theorem}
Let $P$ be  a convex polytope 
with $n$ vertices with a full dimensional span in $\RR^d$.
Let $(G^*,\hp)$ be the tensegrity framework
on $n+1$ vertices 
defined by putting cables on the 
one-skeleton of $P$ and struts from 
the vertices of $P$ to a selected 
cone point in the interior of $P$.

Assume the (weak) stress-flex conjecture (Conjecture~\ref{conj:stress_flex}) is true,
then the 
resulting framework is prestress stable.
\end{theorem}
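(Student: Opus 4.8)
The plan is to feed the strictly proper Izmestiev stress $\Omega$ of Section~\ref{sec:iz} into Lemma~\ref{lem:psd} and then to nail down the equality case. First I would record that $\Omega$ meets both hypotheses of Lemma~\ref{lem:psd}: it has exactly one negative eigenvalue, and its bottom-right entry is $\Omega_{n+1,n+1}=b=\one^{t}M\one<0$. Let $\hp'$ be any infinitesimal flex of the bar framework $(\bar{G}^{*},\hp)$, regarded as an $(n+1)$-by-$d$ matrix, and let $(\hp')^{j}$ denote its $j$th column (the $j$th spatial coordinate). Assuming the stress-flex conjecture, the last row of $\Omega\hp'$ vanishes, so the $(n+1)$st entry of $\Omega(\hp')^{j}$ is zero for every $j$. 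Lemma~\ref{lem:psd} then gives $\bigl((\hp')^{j}\bigr)^{t}\Omega(\hp')^{j}\ge 0$ for each column, and summing over $j$ yields
\[
\tr\bigl(\hp'^{t}\Omega\hp'\bigr)=\sum_{j=1}^{d}\bigl((\hp')^{j}\bigr)^{t}\Omega\,(\hp')^{j}\ \ge\ 0 .
\]
Since $\Omega$ is strictly proper, this already establishes (\ref{eq:pss}) in non-strict form; the work is to upgrade it to a strict inequality on non-trivial flexes.

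Next I would analyze when equality holds. Set $V:=\{\hy:\e^{t}\Omega\hy=0\}$, where $\e$ is the $(n+1)$st indicator vector; the content of Lemma~\ref{lem:psd} is exactly that the form $\hy\mapsto \hy^{t}\Omega\hy$ is positive semidefinite on $V$. If $\tr(\hp'^{t}\Omega\hp')=0$ then each column $\hx=(\hp')^{j}$ satisfies $\hx\in V$ and $\hx^{t}\Omega\hx=0$, so $\hx$ lies in the radical of $\Omega|_{V}$; since $V=(\Omega\e)^{\perp}$, this means $\Omega\hx\in\operatorname{span}(\Omega\e)$, say $\Omega\hx=\mu\,\Omega\e$. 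Then $0=\e^{t}\Omega\hx=\mu\,\e^{t}\Omega\e=\mu b$, and because $b\neq 0$ we get $\mu=0$, i.e.\ $\Omega\hx=0$. Hence every column of $\hp'$ lies in $\ker\Omega$. But $\ker\Omega$ has dimension $d+1$ and already contains $\one$ together with the $d$ columns of $\hp$ (by the properties of the Izmestiev stress, and using that $P$ has full affine span these are independent), so $\ker\Omega$ is exactly their span. Therefore each coordinate of $\hp'$ is an affine function of the coordinates of $\hp$; that is, $\hp'_i=A\hp_i+\t$ for some $d$-by-$d$ matrix $A$ and vector $\t$, so any zero-energy flex is an affine flex.

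Finally I would show that an affine infinitesimal flex of the coned convex polytope is trivial. Writing $S=\tfrac12(A+A^{t})$ and placing the cone point at the origin, the flex condition on the cone struts gives $\p_i^{t}S\p_i=0$ for every vertex, and the condition on a polytope edge $ij$ expands, using these, to $\p_i^{t}S\p_j=0$; consequently the homogeneous quadric $\{x:x^{t}Sx=0\}$ contains every vertex and, by convex combination, every edge of $P$. Restricting to any $2$-face, a conic that contains the $\ge 3$ distinct edge-lines of a convex polygon must vanish identically on that $2$-flat, so $x^{t}Sx$ vanishes on the affine hull of each $2$-face. Since the origin is interior to $P$, none of these flats passes through it; propagating the vanishing through the face lattice then forces $S=0$, whence $A$ is skew-symmetric and $\hp'$ is trivial. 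Stringing the three steps together, every non-trivial infinitesimal flex has $\tr(\hp'^{t}\Omega\hp')>0$, which is precisely (\ref{eq:pss}), so $(G^{*},\hp)$ is prestress stable.

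I expect the last step — that affine infinitesimal flexes of a convex polytope are trivial — to be the main obstacle in general dimension. In $\RR^{3}$ it is clean, since the vanishing of the homogeneous quadric on a single facet hyperplane (which misses the origin) already forces $S=0$; but for $d>3$ the $2$-faces only give $2$-flats, and propagating the vanishing to all of $\RR^{d}$ is where convexity, full-dimensionality, and the ``previous results'' alluded to in the lead-in must be invoked.
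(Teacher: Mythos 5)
Your first two stages coincide with the paper's proof: you verify the hypotheses of Lemma~\ref{lem:psd} for the Izmestiev stress, apply it columnwise under the stress-flex hypothesis to get $\tr(\hp'^{t}\Omega\hp')\ge 0$, and then identify the equality case with affine flexes. In fact your treatment of the equality case is more careful than the paper's: the lemma as stated only asserts semidefiniteness on $V=(\Omega\e)^{\perp}$, and your radical argument ($\Omega\hx\in\operatorname{span}(\Omega\e)$, then $\mu b=0$ forces $\Omega\hx=0$) is exactly the missing justification for the paper's claim that zero energy puts each coordinate of $\hp'$ in $\ker\Omega$; your identification of $\ker\Omega$ with $\operatorname{span}(\one,\hp^{1},\dots,\hp^{d})$ via the rank count is also correct.

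Where you diverge is the final stage, showing that an affine flex of the coned polytope must be trivial. The paper outsources this entirely: a non-trivial affine flex forces the edge directions onto a conic at infinity (\cite[Lemmas A.3, A.7]{cgPss}), a coned framework with that property is ruled (\cite[Lemma 4.10]{conic}), and a convex polytope's one-skeleton cannot be ruled (\cite[Prop 3.4]{conic}). You instead compute directly with $S=\tfrac12(A+A^{t})$, and your derivation that $x^{t}Sx$ vanishes on every edge and hence on the affine hull of every $2$-face (three or more distinct lines kill a conic) is correct; so is the conclusion for $d=3$, where a single facet plane missing the origin spans a dense cone. The gap you flag for $d>3$ is real as written, but it is closable by exactly the face-lattice induction you gesture at: if $x^{t}Sx$ vanishes on the affine hulls of all $(k-1)$-faces of a $k$-face $F$, then its restriction to $\operatorname{aff}(F)$ is a quadratic vanishing on at least $k+1\ge 4$ distinct hyperplanes of that $k$-flat, while a nonzero quadratic form vanishes on at most two hyperplanes; so it vanishes on $\operatorname{aff}(F)$. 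Running this up to the facets of $P$, whose affine hulls miss the interior origin, gives $S=0$ in every dimension. So your route can be made fully self-contained and elementary, at the cost of writing out that induction; the paper's route is shorter but leans on three external results. As submitted, your proof is complete only for $d=3$, and you should either carry out the induction or cite the conic-at-infinity/ruled machinery to cover general $d$.
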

\begin{proof}
From Section~\ref{sec:iz}, $(G^*,\hp)$ must have
an Izmestiev stress. This stress has exactly one negative
eigenvalue and its lower-right entry is negative.
Under the stress-flex conjecture, for any 
infinitesimal flex $\hp'$ of
of $(\bar{G},\hp)$, we have the last row of 
$\Omega \hp'$ is equal to zero. 
Using Lemma~\ref{lem:psd} we must have
$\tr(\p'^t \Omega \p') \ge 0$. 

Next we show that if
$\tr(\p'^t \Omega \p') =0$, then $\p'$ must be a trivial
flex.  From Lemma~\ref{lem:psd}, if
$\tr(\p'^t \Omega \p') =0$ then we must have   
(each of the $d$ spatial coordinates of) 
$\p'$ in the kernel of $\Omega$. 
From the structure of the kernel of $\Omega$,
this makes
$\p'$ an ``affine flex'' of $\p$, i.e. 
for all $i$, $\p'_i=A\p_i+\t$ for some matrix $A$
and vector $\t$.  

Suppose this affine flex is not trivial.
From~\cite[Lemmas A.3, A.7]{cgPss}, this implies
that $(\bar{G^*},\hp)$ has its ``edges on a conic at infinity''
(see that paper for definitions).
Since this is a coned framework, from~\cite[Lemma 4.10]{conic}
the framework must be ``ruled'' (see that paper for definitions). 
But the one skeleton of a convex polytope cannot be
ruled (\cite[Prop 3.4]{conic}.)

Thus for a non-trivial flex $\hp$, the inequality of 
Equation (\ref{eq:pss}) must hold, making $(G^*,\hp)$
prestress stable.

\end{proof}

\def\v{\oldv}
\bibliographystyle{abbrvlst}
\bibliography{framework}

\begin{thebibliography}{9}
\providecommand{\natexlab}[1]{#1}
\providecommand{\url}[1]{\texttt{#1}}
\expandafter\ifx\csname urlstyle\endcsname\relax
  \providecommand{\doi}[1]{doi: #1}\else
  \providecommand{\doi}{doi: \begingroup \urlstyle{rm}\Url}\fi

\bibitem[Connelly and Gortler(2017)]{cgPss}
R.~Connelly and S.~J. Gortler.
\newblock Prestress stability of triangulated convex polytopes and universal
  second-order rigidity.
\newblock \emph{SIAM Journal on Discrete Mathematics}, 31\penalty0
  (4):\penalty0 2735--2753, 2017.

\bibitem[Connelly and Whiteley(1996)]{pss}
R.~Connelly and W.~Whiteley.
\newblock Second-order rigidity and prestress stability for tensegrity
  frameworks.
\newblock \emph{SIAM J. Discrete Math.}, 9\penalty0 (3):\penalty0 453--491,
  1996.
\newblock ISSN 0895-4801.

\bibitem[Connelly et~al.(2018)Connelly, Gortler, and Theran]{conic}
R.~Connelly, S.~J. Gortler, and L.~Theran.
\newblock Affine rigidity and conics at infinity.
\newblock \emph{International Mathematics Research Notices}, 2018\penalty0
  (13):\penalty0 4084--4102, 2018.

\bibitem[Holmes-Cerfon et~al.(2021)Holmes-Cerfon, Theran, and Gortler]{almost}
M.~Holmes-Cerfon, L.~Theran, and S.~J. Gortler.
\newblock Almost-rigidity of frameworks.
\newblock \emph{Communications on Pure and Applied Mathematics}, 74\penalty0
  (10):\penalty0 2185--2247, 2021.

\bibitem[Izmestiev(2010)]{izCdv}
I.~Izmestiev.
\newblock The colin de verdiere number and graphs of polytopes.
\newblock \emph{Israel Journal of Mathematics}, 178\penalty0 (1):\penalty0
  427--444, 2010.

\bibitem[Izmestiev(2014)]{izInf}
I.~Izmestiev.
\newblock Infinitesimal rigidity of convex polyhedra through the second
  derivative of the hilbert--einstein functional.
\newblock \emph{Canadian Journal of Mathematics}, 66\penalty0 (4):\penalty0
  783--825, 2014.

\bibitem[Lov{\'a}sz(2001)]{lovS}
L.~Lov{\'a}sz.
\newblock Steinitz representations of polyhedra and the colin de verdiere
  number.
\newblock \emph{Journal of Combinatorial Theory, Series B}, 82\penalty0
  (2):\penalty0 223--236, 2001.

\bibitem[McCooey()]{polys}
D.~I. McCooey.
\newblock Visual polyhera.
\newblock \url{ http://dmccooey.com/polyhedra/ }.

\bibitem[Winter(2023)]{winter}
M.~Winter.
\newblock Rigidity, tensegrity, and reconstruction of polytopes under metric
  constraints.
\newblock \emph{International Mathematics Research Notices}, 2023.

\end{thebibliography}

\end{document}